\newtheorem{thm}{Theorem} \newtheorem{lemma}{Lemma} \newtheorem{coro}{Corollary}
\let\paragraph\subsection
\title{Green functions of Energized complexes}
\author{Oliver Knill}
\date{October 18, 2020}
\address{Department of Mathematics \\ Harvard University \\ Cambridge, MA, 02138 }
\subjclass{05C10, 57M15}
\keywords{Geometry of simplicial complexes}
\begin{document}
\maketitle

\section{Results}

\paragraph{}
A function $h: G \to \mathbb{K}$ from a finite abstract simplicial complex $G$ to a ring $\mathbb{K}$ with
conjugation $x^*$ defines $\chi(A)=\sum_{x \in A} h(x)$ and $\omega(G)=$  
$\sum_{x,y\in G, x \cap y \neq \emptyset} h(x)^* h(y)$.
Define $L(x,y) = \chi(W^-(x) \cap W^-(y))$ and $g(x,y) = \omega(x) \omega(y) \chi(W^+(x) \cap W^+(y))$, where
$W^-(x)=\{z \; | \;  z \subset x\}$, $W^+(x) = \{z \; | \;  x \subset z\}$ and 
$\omega(x)=(-1)^{{\rm dim}(x)}$ with ${\rm dim}(x)=|x|-1$.

\paragraph{} The following relation \cite{KnillEnergy2020} 
only requires the addition in $\mathbb{K}$

\begin{thm}
$\chi(G) = \sum_{x,y \in G} g(x,y)$
\label{Theorem 1}
\end{thm}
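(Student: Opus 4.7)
The plan is to expand the right-hand side, swap the order of summation, and reduce the claim to a standard binomial identity on the face poset of a single simplex. Since the statement only requires addition in $\mathbb{K}$, no clever algebraic manipulation is needed; everything is formal bookkeeping.

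First, I would expand $\chi(W^+(x)\cap W^+(y))$ directly from the definition of $\chi$: it equals $\sum_{z\in G,\, x\subseteq z,\, y\subseteq z} h(z)$. Substituting this into $\sum_{x,y\in G} g(x,y)$ and interchanging the (finite) sums in $x$, $y$ and $z$ gives
\[
\sum_{x,y\in G} g(x,y) \;=\; \sum_{z\in G} h(z)\,\Bigl(\sum_{x\in G,\, x\subseteq z}\omega(x)\Bigr)\Bigl(\sum_{y\in G,\, y\subseteq z}\omega(y)\Bigr).
\]
Thus it suffices to show that, for every simplex $z \in G$, the face-sum $S(z) := \sum_{x\subseteq z} \omega(x)$ equals $1$, because then the right-hand side collapses to $\sum_{z\in G} h(z) = \chi(G)$.

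The identity $S(z)=1$ is the core combinatorial step. If $\dim(z)=k$, then $z$ has exactly $\binom{k+1}{j+1}$ subfaces of dimension $j$ for $0\le j\le k$, all of which are simplices of $G$ because $G$ is closed under taking nonempty subsets. Hence
\[
S(z) \;=\; \sum_{j=0}^{k} \binom{k+1}{j+1}(-1)^{j} \;=\; -\sum_{i=1}^{k+1}\binom{k+1}{i}(-1)^{i} \;=\; -\bigl((1-1)^{k+1}-1\bigr) \;=\; 1,
\]
by the binomial theorem. Plugging $S(z)^2 = 1$ back yields the theorem.

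The only real subtlety — and the single place where one could slip up — is the convention for $W^-$: one has to treat $x\subseteq z$ rather than strict inclusion (so that $x=z$ is included) and to remember that the empty set is \emph{not} a simplex in $G$, which is precisely what makes the alternating sum of binomial coefficients telescope to $1$ instead of $0$. Once that convention is pinned down, the proof is a two-line Fubini argument followed by the binomial identity, and it uses nothing beyond addition and the sign function $\omega$, as the paper claims.
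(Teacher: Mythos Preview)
Your proof is correct and is essentially the same as the paper's: both expand the double sum, interchange to collect the coefficient of each $h(z)$, factor that coefficient as $\bigl(\sum_{x\subseteq z}\omega(x)\bigr)^2$, and invoke that the topological Euler characteristic of a simplex is $1$. The only cosmetic difference is that the paper phrases this as comparing coefficients in a free algebra (writing $h(x)=x$ as formal variables), whereas you work directly with $h(z)$; the underlying bookkeeping and the key binomial identity are identical.
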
 

\paragraph{}
The next new {\bf quadratic energy relation} links simplex interaction with
multiplication in $\mathbb{K}$. Define $|h|^2=h^* h=N(h)$ in $\mathbb{K}$.

\begin{thm}
$\omega(G) = \sum_{x,y \in G} \omega(x) \omega(y) |g(x,y)|^2$. 
\label{Theorem 2}
\end{thm}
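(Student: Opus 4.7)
The plan is to expand the right-hand side, swap the order of summation to push the sums over $x,y$ inward, and then reduce the inner double sum to a standard inclusion-exclusion alternating sum that collapses to a $\{0,1\}$-indicator. This parallels the derivation of Theorem~\ref{Theorem 1} but in the multiplicative setting.

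First, I would observe that the inner $\omega$-factors in $g$ disappear from $|g(x,y)|^2$: since $\omega(x),\omega(y) \in \{\pm 1\}$ are self-conjugate and square to $1$, one gets $|g(x,y)|^2 = |\chi(W^+(x) \cap W^+(y))|^2$. Writing $\chi(W^+(x)\cap W^+(y)) = \sum_{z:\, x,y \subset z} h(z)$ and expanding the squared norm produces a double sum
$$|g(x,y)|^2 \;=\; \sum_{z,w \in G,\; x,y \subset z,\; x,y \subset w} h(z)^* h(w).$$
Substituting this into the right-hand side of Theorem~\ref{Theorem 2} and exchanging the order of summation, with $(z,w)$ outside and $(x,y)$ inside, yields
$$\sum_{z,w \in G} h(z)^* h(w)\; B(z,w)^2, \qquad B(z,w) \;=\; \sum_{x \in W^-(z) \cap W^-(w)} \omega(x),$$
since the inner sum factors as $\big(\sum_x \omega(x)\big)\big(\sum_y \omega(y)\big)$ over the same index set.

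The crux is then the evaluation of $B(z,w)$. Because $G$ is a simplicial complex, $W^-(z) \cap W^-(w)$ consists precisely of the nonempty subsets of $z \cap w$. If $z \cap w = \emptyset$, the sum is empty so $B(z,w)=0$. If $|z\cap w| = k \geq 1$, then
$$B(z,w) \;=\; \sum_{j=1}^{k} \binom{k}{j}(-1)^{j-1} \;=\; -\sum_{j=0}^{k}\binom{k}{j}(-1)^j + 1 \;=\; 1.$$
Hence $B(z,w)^2$ is exactly the indicator of $z \cap w \neq \emptyset$, and the right-hand side collapses to $\sum_{z,w:\, z\cap w \neq \emptyset} h(z)^* h(w) = \omega(G)$, as required.

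The main conceptual obstacle is really just the bookkeeping of the summation swap and the recognition that, after the two $\omega$-factors from each $g$ have cancelled against themselves through conjugation, the remaining outer $\omega(x)\omega(y)$ assemble, after the swap, into the Euler-characteristic-style alternating sum over $W^-(z \cap w)$. Once this is seen, the computation is purely the combinatorial identity $\sum_{\emptyset \neq S' \subset S}(-1)^{|S'|-1}=1$, which is the same mechanism that underlies Theorem~\ref{Theorem 1}.
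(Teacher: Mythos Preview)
Your proof is correct and follows essentially the same route as the paper's: expand $|g(x,y)|^2$ as a double sum over superset pairs, swap the order of summation so that the superset pair is on the outside, and then recognise the inner $\omega$-sum as $\chi_{\mathrm{top}}(z\cap w)^2$, which equals the indicator of $z\cap w\neq\emptyset$. The paper phrases this in the free-algebra/generating-function language (treating $h(x)$ as the variable $x$), but the combinatorial content is identical.
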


\paragraph{}
The next determinant identity holds if $h$ maps $G$ to a division algebra $\mathbb{K}$
and {\rm det} is the {\bf Dieudonn\'e determinant} \cite{Dieudonne1943}. The geometry 
$G$ can here be a finite set of sets and does not need the simplical complex axiom
stating that $G$ is closed under the operation of taking non-empty finite subsets.
%$x \in G, y \neq \emptyset \subset x \Rightarrow y \in G$.

\begin{thm}
${\rm det}(L)={\rm det}(g)=\prod_{x \in G} h(x)$.
\label{Theorem 3}
\end{thm}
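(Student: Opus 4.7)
The plan is to exhibit explicit triangular factorizations of $L$ and $g$ and then invoke multiplicativity of the Dieudonn\'e determinant. Order the elements of $G$ by non-decreasing cardinality, breaking ties arbitrarily, and introduce matrices indexed by $G\times G$:
\begin{equation*}
F(x,z)=[z\subseteq x], \quad F^{+}(x,z)=[x\subseteq z], \quad D=\mathrm{diag}(h(x)), \quad \Omega=\mathrm{diag}(\omega(x)),
\end{equation*}
where $[\,\cdot\,]$ denotes the Iverson bracket. A direct entrywise computation then yields
\begin{equation*}
(FDF^{T})(x,y) \;=\; \sum_{z\in G,\;z\subseteq x\cap y}h(z) \;=\; \chi\bigl(W^{-}(x)\cap W^{-}(y)\bigr) \;=\; L(x,y),
\end{equation*}
and analogously $g = \Omega F^{+} D (F^{+})^{T} \Omega$, using $W^{+}(x)\cap W^{+}(y)=\{z\in G : x,y\subseteq z\}$.

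Next I would observe that under the chosen ordering, $z\subsetneq x$ forces $|z|<|x|$ and hence places $z$ strictly before $x$; combined with $F(x,x)=F^{+}(x,x)=1$, this makes $F$ lower triangular and $F^{+}$ upper triangular, each with unit diagonal. Only the ambient set-theoretic structure of $G$ enters this step, which explains why the simplicial complex axiom can be dropped.

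To conclude, I would apply the standard properties of the Dieudonn\'e determinant, which takes values in $\mathbb{K}^{\times}/[\mathbb{K}^{\times},\mathbb{K}^{\times}]$: a triangular matrix with unit diagonal has determinant $1$; the diagonal matrix $\Omega$ has entries in $\{\pm 1\}$ and satisfies $\Omega^{2}=I$; and a diagonal matrix has determinant equal to the (abelianized) product of its diagonal entries. Multiplicativity of $\det$ then gives $\det(L)=\det(F)\det(D)\det(F^{T})=\det(D)=\prod_{x\in G}h(x)$ and $\det(g)=\det(\Omega)\det(F^{+})\det(D)\det((F^{+})^{T})\det(\Omega)=\det(D)=\prod_{x\in G}h(x)$.

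The main obstacle is noncommutativity: the matrix identities $L=FDF^{T}$ and $g=\Omega F^{+}D(F^{+})^{T}\Omega$ hold entrywise over any ring, but the resulting determinant equalities must be read in the abelianization $\mathbb{K}^{\times}/[\mathbb{K}^{\times},\mathbb{K}^{\times}]$, where the product $\prod_{x\in G}h(x)$ is independent of the ordering of its factors. Verifying the triangularity and multiplicativity properties of $\det$ in this noncommutative setting is classical \cite{Dieudonne1943}, so the obstacle is technical rather than conceptual.
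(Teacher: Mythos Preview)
Your argument is correct and takes a genuinely different route from the paper. The paper proceeds by induction on $|G|$, exploiting the duality $x\mapsto V\setminus x$ between the two matrices $L^{\pm}(u,v)=\sum_{x\subset u\cap v}x$ and $\sum_{u\cup v\subset x}x$: for a locally minimal element the relevant row and column vanish at $h=0$, forcing the determinant to be linear (not merely affine) in that variable, while for a locally maximal element the variable appears in a single corner entry, so a Laplace expansion pins down the proportionality constant via the induction hypothesis. Your factorization $L=FDF^{T}$ and $g=\Omega F^{+}D(F^{+})^{T}\Omega$ with $F$ the unitriangular zeta matrix of the inclusion poset is cleaner and more conceptual: it bypasses the induction entirely, explains in one stroke why the simplicial-complex axiom is irrelevant, and yields extra structural information (for instance, over a commutative $\mathbb{K}$ it immediately gives $L$ and $g$ as congruent to $D$, hence Sylvester-type control over signatures). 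The paper's duality argument, on the other hand, highlights the symmetry between $W^{-}$ and $W^{+}$ in the category of finite sets of sets, which is an interesting structural point in its own right but is not needed once one has your $LDL^{T}$-type decomposition. Note also that $F^{+}=F^{T}$, so in fact the two factorizations are $L=FDF^{T}$ and $g=\Omega F^{T}DF\Omega$, making the relationship between $L$ and $g$ explicit at the matrix level.
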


\paragraph{}
If $h:G \to \mathbb{K}$ takes values in the {\bf units} $U(\mathbb{K})$ of $\mathbb{K}$, 
like i.e. $\mathbb{Z}_2,U(1),SU(2),\mathbb{S}^7$ of the
division algebras $\mathbb{R},\mathbb{C},\mathbb{H},\mathbb{O}$, 
the {\bf unitary group} $U(H) \cap \mathbb{K}$ of an operator $C^*$-algebra $\mathbb{K} \subset B(\mathcal{H})$ 
for some Hilbert space $\mathcal{H}$ or the 
units in a ring $\mathbb{K}=O_K$ of integers of a number field $K$,
and if $G$ is a simplicial complex, then:

\begin{thm}
If $h(x)^* h(x)=1$ for all $x \in G$, then $g^* = L^{-1}$.
\label{Theorem 4}
\end{thm}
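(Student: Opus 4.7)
The plan is to verify directly that $L g^* = I$, which combined with the invertibility of $L$ already furnished by Theorem~3 gives $g^* = L^{-1}$. Writing the matrix entries out via the definitions, the product factors as
\[
(L g^*)(x,z) = \omega(z) \sum_{\substack{u \in G \\ u \subset x}} \sum_{\substack{v \in G \\ z \subset v}} h(u)\, h(v)^* \sum_{\substack{y \in G \\ u \subset y \subset v}} \omega(y),
\]
where I have pulled the central scalars $\omega(y),\omega(z)\in\{-1,+1\}$ across noncommuting factors of $\mathbb{K}$. The simplicial-complex axiom on $G$ enters exactly here: every $y$ with $u \subset y \subset v$ and $v \in G$ automatically lies in $G$, so the inner sum is truly over the full Boolean interval $[u,v]$.

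The key combinatorial input is the M\"obius-type identity
\[
\sum_{u \subset y \subset v} (-1)^{|y|-1} = (-1)^{|u|-1}(1-1)^{|v|-|u|},
\]
which vanishes unless $u = v$ and otherwise equals $\omega(u)$; this is just the binomial expansion of $0^{|v|-|u|}$. The double sum over $(u,v)$ therefore collapses to its diagonal, leaving
\[
(L g^*)(x,z) = \omega(z) \sum_{\substack{u \in G \\ z \subset u \subset x}} h(u)\, \omega(u)\, h(u)^*.
\]

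The unitarity hypothesis now finishes the argument. In each of the listed ambient algebras the condition $h(u)^* h(u) = 1$ entails $h(u) h(u)^* = 1$ as well: automatic in a division algebra, definitional for unitaries in a $C^*$-algebra, and built into the notion of unit in a ring of integers. So after pushing the central scalar $\omega(u)$ across, the summand is simply $\omega(u)$, and applying the same binomial identity a second time collapses the remaining sum to $\omega(z)^2[z = x] = \delta_{xz}$.

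The only real obstacle is bookkeeping. One must check that the simplicial-complex axiom is invoked precisely where the $y$-sum needs to range freely over $[u,v]$ (this is also why Theorem~3 could afford the weaker hypothesis while Theorem~4 cannot), and that when commuting factors one only moves the central $\omega$-scalars, while the noncommutative products $h(u)h(u)^*$ are evaluated in place before simplification.
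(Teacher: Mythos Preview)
Your argument is correct and follows essentially the same route as the paper: compute the matrix product directly and collapse the inner sum using the interval identity $\sum_{u\subset y\subset v}\omega(y)=\omega(u)\,[u=v]$ (the paper records this as Lemma~1). The only noteworthy difference is that the paper multiplies in the order $g^*L$ rather than $Lg^*$; after the first collapse this produces $h(u)^*h(u)$ instead of your $h(u)h(u)^*$, so the hypothesis $h^*h=1$ applies verbatim and the extra justification you give for $hh^*=1$ becomes unnecessary. The paper also phrases the second collapse slightly differently, keeping general $h$ and observing that the off-diagonal entries are alternating sums of norms $|h(u)|^2$ (hence expressible as sums of differences $|h(y)|^2-|h(z)|^2$) before specializing to the unitary case, whereas you invoke unitarity first and then apply the interval identity a second time; the content is the same.
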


\paragraph{}
For an overview of simplicial complexes and references, see \cite{AmazingWorld}. 
Except for Theorem~(\ref{Theorem 2}), the results were known \cite{KnillEnergy2020,EnergizedSimplicialComplexes}
in special special cases like the topological $h(x)=\omega(x)$, where $\chi(G)=\sum_{x \in G} \omega(x)$ is the
{\bf Euler characteristic} and $\omega(G)=\sum_{x \sim y \in G}$ $\omega(x) \omega(y)$ is the
{\bf Wu characteristic} \cite{Wu1953,Gruenbaum1970}. The pair $(G,\mathbb{K})$ is an example of a 
{\bf ringed space} or a sheaf. For $\mathbb{K}=\mathbb{Z}$ we might think of $h$ as a {\bf divisor} 
and $h(G)$ as its degree, for $\mathbb{K}=\mathbb{C}$ as a quantum mechanical wave and $|\omega(h)|^2 = h^* h$ 
as an amplitude, for $\mathbb{K}=\mathbb{R}^n$, we might
interpret $h$ as a section of a vector bundle or as an embedding of $G$ in $\mathbb{R}^n$ like for example when
doing a geometric realization of $G$. 

\paragraph{}
When taking $\mathbb{K}=\mathcal{G}$ as the ring generated by 
simplicial complexes and $h(x)=X(x)$, the complex generated by $x \in G$,  
we can see $G \in \mathbb{K} \to \exp(G)={\rm det}(L(G)) \in \mathbb{K}$ as an 
{\bf exponential map} because $\exp(G_1+G_2)=\exp(G_1) \exp(G_2)$ 
as addition $G_1+G_2$ is the disjoint union and Theorem~(\ref{Theorem 3}) shows 
that we get from a sum a product. 

\section{Proofs}

\paragraph{}
We reprove Theorem~(\ref{Theorem 1}) algebraically.
The setup in\cite{KnillEnergy2020} was harder, because we did not
start with the explicit expressions for $g(x,y)$ yet.
Let $\{x_1, \dots, x_n\}$ enumerate 
the elements of $V=\bigcup_{x \in G} x$ and $h$ take values in the 
{\bf free algebra} (monoid ring) generated by the variables $x_1,\dots, x_n$. 
If $\mathbb{K}$ is commutative, we can work with the 
{\bf polynomial algebra} $\mathbb{Z}[x_1, \dots x_n]$. 
The algebraic picture is now transparent:

\begin{proof}
Write $h(x)=x$ and have $x$ be the variable associated to the set $x \in G$. 
The matrix entries of $L$ and $g$ are linear expressions:
$$  L(u,v) = \sum_{x \in G, x \subset u \cap v} x                      \; , $$
$$  g(u,v) = \sum_{x \in G, u \cup v \subset x} \omega(u) \omega(v) x  \; . $$
Seen as such, the claim is the algebraic relation
$$  \sum_{x \in G} x  = \sum_{x \in G} [\sum_{u,v \in G, u \cup v \subset x}  \omega(u) \omega(v)] x \; . $$
Because $x$ is a simplex of Euler characteristic $1$, we have
$\sum_{u \subset x} \omega(u) = 1$ and $\sum_{v \subset x} \omega(v) = 1$ so that also 
$$ [\sum_{u,v \in G, u \cup v \subset x}  \omega(u) \omega(v)] = 
   [\sum_{u \in G, u \subset x}  \omega(u)]^2  = 1 \; . $$
\end{proof}

\paragraph{}
Theorem~(\ref{Theorem 2}) can also be seen algebraically. While one needs to distinguish $xy$ and $yx$ 
in the non-commutative case, associativity does not yet factor in because only products of two elements
occur. The Theorem also so holds also for octonions $\mathbb{K}=\mathbb{O}$ or Lie algebras $x*y  = [x,y]$
or if $x y = \langle x,y \rangle$ is considered to be an inner product.

\begin{proof}
When writing the expressions algebraically, $\omega(G) = \sum_{x \cap y \neq \emptyset} x^* y$
is a {\bf generating function} for all intersection relations in the complex $G$. 
Take a pair of sets $x,y$ which do need to be different and look at the expression $x^* y$ on the left. 
On the right, the term $x^* y$ appears if we consider $g(u,v)$ for any pair $u,v \subset x \cap y$. 
We see especially that $x$ and $y$ need also to have a non-empty intersection to the right. 
We have to show
$$ x^* y = \sum_{u,v \subset x \cap y}  \omega(u) \omega(v) x^* y  \; .$$
We get the same term $x^* y$ on the right because
$$ \sum_{u \cup v \subset x \cap y} \omega(u) \omega(v) 
 [\sum_{u \subset x \cap y} \omega(u)] [ \sum_{v \subset x \cap y} \omega(v)] $$
which is $\chi_{top}(x \cap y)^2=1$.
\end{proof}

\paragraph{}
Theorem~(\ref{Theorem 3}) holds more generally for any set $G$ of non-empty sets, where also
the empty set $\emptyset$ (= void) is allowed. Unlike for simplicial complexes, the class of sets of sets has an involution
$x \leftrightarrow x'=V=\bigcup_{x \in G} x  \setminus x$, assigning to $x$ its complement $x' \in V$.
The proof makes use of this duality switching $W^+(x)$ and $W^-(x)$ as to establish linearity of ${\rm det}$ in one variable, 
we need both a proportionality factor $1$ as well as the affinity factor $0$ in each variable. 

\begin{proof}
Because $L^+(u,v) = \sum_{x \in G, x \subset u \cap v} x$ and
$L^-(u,v) = \sum_{x \in G, u \cup v \subset x} x$ are dual to each other in the 
category of sets of sets, we only need to verify the identity for $L=L^-$ or $L^+$.
We can use induction with respect to the number of elements $n$ in $G$ and use that
if we we lift a property for $L^+$ from $(n-1)$ to $n$, we have also shown it 
for $L^-$. For $n=1$, the situation is clear as then $L^+=L^-=[x]$ is a $1 \times 1$ matrix.
In general because the matrix entries of $L$ are linear in each variable,
a Laplace expansion will show in the induction that the determinant is affine $a_k x_k + b_k$ in 
each variable $x_k$. We need then to establish multi-linearity.
The induction assumption is that for any set of $(n-1)$ sets like $\{ x_2, \dots, x_{n} \}$ or
we have ${\rm det}(L^+)={\rm det}(L^-)=\prod x_i$, which is a multi-linear expression in each of the variables.
Lets assume that $x_1$ is a minimal element as a set then $L^-$ has zero column and row entries if its value is zero 
and deleting these rows and columns produces the connection matrix $L^+$ of a set of sets without 
the $x_1$ set in which some entries are changed. Still as a row is zero, the expression
${\rm det}(L^+(x_1))$ is linear $a x_1$ in $x_1$ for some $a$ and not affine $a x_1 +b$. 
To fix the proportionality factor $a$ we use duality and look at $x_1$ in $L^+(x)$ which corresponds
to take a maximal element $x_n$ in $L^-(x)$ which means that it is a minimal element in the dual picture. 
 Given $G$ with $n$ elements, and $x=x_n$ is maximal, we look at ${\rm det}(L^-(x))$. 
In that matrix $L^-$, only the corner entry $L^-_{n,n}$ contains a linear expression in $x$ and $x$ does not appear anywhere else.
A Laplace expansion shows then that the determinant is of the form $x {\rm det}(A) + b$, where $A$ is the
$n-1 \times n-1$ matrix in which the last column and row is deleted and $b$ is some constant. 
Together, these two insights show adding a new set, the determinant is a linear function in the energy $h(x)$
of that set so that ${\rm det}(L) = \prod_{i=1}^{n} x_i$. 
\end{proof}

\paragraph{}
To see Theorem~(\ref{Theorem 4}), we order $G$ so that if $|x|<|y|$, then the set $x$ comes before $y$
in the listing of $G$. Also this theorem needs the simplicial complex assumption for $G$. 

\begin{proof}
With the elements in $G$ ordered according to dimension, the matrix $g^* L$ is 
(i) upper triangular, (ii) contains terms $|x|^2=x^* x$ in the diagonal
and (iii) contains only sums of terms of the form $|y|^2-|z|^2$ in the upper triangular part. 
If all $|x|^2=1$, these three properties (i),(ii),(iii) then show that $g^* L$ 
is the identity matrix. Now to the proof of the three statements: the product
$(g^* L)(x,y) = \sum_{z \in G} g^*(x,z) L(z,y)$ with 
$g^*(x,z) = \sum_{u, x \cup z \subset u} \omega(x) \omega(z) u^*$
and $L(z,y) = \sum_{v, v \subset z \cap y} v$ is
$$  \sum_{z} \omega(z) g^*(x,z) L(z,y) = \sum_z \sum_{u,v, x \cup z \subset u, v \subset z \cap y} \omega(x) \omega(z) u^* v $$ 
which is $0$ if $y \subset x$ and equal to $x^* y = x^* x=|x|^2=1$ if $x=y$ and which is a sum of terms
$\sum_z \sum_{x \subset z \subset y} \omega(x) \omega(z) |z|^2 = 0$ if $x \subset y$. 
The last follows from $\sum_{x \subset z \subset y} \omega(z) = 0$ rephrasing that the 
reduced Euler characteristic $1-\chi_{top}(X)$ of a simplex $X \subset G$ is zero. 
\end{proof}

\paragraph{}
Let us formulate the last step as a lemma

\begin{lemma}
If $X$ is a complete complex with $n$ elements and $Y \subset X$ is a complete sub-complex 
with $0 <m<n$ elements, then the number of odd and even dimensional simplices in $X$ containing $Y$ 
are the same. 
\end{lemma}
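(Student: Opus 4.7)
My plan is to convert the cardinality claim into the vanishing of an alternating sum and then apply the binomial theorem, which is precisely the input invoked at the end of the proof of Theorem~(\ref{Theorem 4}). First I would identify the complete complex $X$ with its vertex set and likewise for $Y\subset X$, so that the simplices $z\in X$ containing $Y$ are exactly the sets $z$ with $V(Y)\subset z\subset V(X)$. The equinumerosity assertion is then equivalent to
\[
 \sum_{V(Y)\subset z\subset V(X)}(-1)^{{\rm dim}(z)}=0.
\]

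Next I would exploit the obvious bijection $z\leftrightarrow S:=z\setminus V(Y)$ between these simplices and the subsets of $V(X)\setminus V(Y)$. Since $m>0$ forces $V(Y)\neq\emptyset$, every such $z$ is automatically non-empty and thus a legitimate simplex of $X$, and $|z|=|V(Y)|+|S|$. Pulling out the fixed sign $(-1)^{|V(Y)|-1}$, the identity reduces to
\[
 \sum_{S\subset V(X)\setminus V(Y)}(-1)^{|S|}=0.
\]

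The hypothesis $m<n$ guarantees that $V(X)\setminus V(Y)$ is non-empty, so the binomial theorem gives $(1-1)^{|V(X)\setminus V(Y)|}=0$ and the argument is complete. I do not foresee any real obstacle; the only point requiring care is the bookkeeping between the face-counts $m,n$ and the corresponding vertex sets, after which the lemma is just the topological statement that a non-empty simplex has reduced Euler characteristic zero.
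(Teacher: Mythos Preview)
Your argument is correct and follows essentially the same route as the paper: the paper also uses the bijection $z\mapsto z\setminus y$ (your $z\mapsto S$) to identify the simplices containing $y$ with all subsets of the complementary vertex set, and then evaluates the resulting binomial expansion $(1+t)^{n-m}$ at $t=-1$. Your remark about the bookkeeping between the counts $m,n$ and the vertex sets is apt, since the paper's statement is a bit loose there, but the argument is otherwise the same.
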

\begin{proof}
Assume $X$ is the complex generated by its largest element $x$ and $Y$ is the complex generated
by its largest element $y$.  Define a map $\phi: z \to z \setminus y$ and build the set of sets 
$\phi(X) = \{ \phi(z), z \in X \}$. It is an extended complete simplicial complex containing also 
the void $\emptyset$, a set of dimension $-1$ satisfies $\omega(\emptyset) = (-1)$. 
The f-vector $(f_{-1},f_0,f_1, \dots, f_{n-m-1})$ has the Binomial coefficients $f_k=B(n-m,k+1)$
as components. Because $f(t) = \sum_{k=0}^{n-m} f_{k-1} t^k = (1+t)^{n-m}$ satisfies $f(-1)=0$
for $n>m$, the number of odd and even dimensional simplices are the same. 
\end{proof}

\paragraph{}
For $m=0$, there is one more even dimensional simplex and odd dimensional and $\sum_{y \subset x} \omega(x)=1$
for $m=n$, there is only the simplex $x$ and $\sum_{x \subset y \subset x} \omega(x) = \omega(x)$.
For $X$ is the complex generated by $x$ which is $X=\{ x=\{ 1,2,3 \},\{1,2\},\{1,3\},\{2,3\}, \{1\},\{2\},\{3\} \}$ 
and $Y=\{ \{ y=\{1,2\} \} \}$ then $\{ \{1,2\}, \{1,2,3\} \}$ is the set of sets in $x$ containing $y$. 
It contains one even and one odd dimensional simplex. 
If $Y=\{ y = \{1\} \}$, then $\{ \{1\},\{1,2\},\{1,3\},\{1,2,3\} \}$ contains two even and two odd dimensional
simplices. 

\section{Remarks}

\paragraph{}
Theorem~(\ref{Theorem 3}) justifies to see $g(x,y)$ as {\bf Green function entries} or potential energy 
values between $x$ and $y$. The notation $N(g(x,y))$ for {\rm arithmetic norm} or the real {\bf amplitudes} 
is commonly used if the ring $\mathbb{K}$ is a {\bf number field} or {\bf ring of integers} 
in a number field like $N(a+ib)=a^2+b^2$ in $\mathbb{K}=\mathbb{Z}[i]$. 
In the case if $\mathbb{K}$ is a $C^*$ algebra, then $N(x)=|h(x)|^2$ is the square of the norm
of the operator $h(x)$ which is the {\bf spectral radius} of the self-adjoint operator $x^* x$. 

\paragraph{}
If $\sum_{x \in G} L(x,x)$ denotes the {\bf trace} of $L$ then 
$$  {\rm str}(L) = \sum_{x \in G} \omega(x) L(x,x) $$
is called the {\bf super trace}. With the {\bf checkerboard matrix}
$S(x,y) = \omega(x) \omega(y)$ one can write ${\rm str}(L)= {\rm tr}(SL)$. Our first proof of 
Theorem~(\ref{Theorem 1}) used the following identity in Corollary~(\ref{Corollary 6}) which had been a key 
when proving the energy theorem in the topological case without the Green-Star formula. It actually identified
with the Green function entries $K(x)=\omega(x) g(x,x)$ as {\bf curvature} which add up by Gauss-Bonnet
to Euler characteristic. Theorem~(\ref{Theorem 3}) then identifies this curvature $K(x)$ as the 
{\bf potential} which all the simplices (including $x$) induce on $x$. 
When $h(x)=\omega(x)$ we have seen $\omega(x) g(x,x) = 1-S(x)$ as the reduced Euler characteristic of the unit
sphere $S(x)$ in the Barycentric refinement graph of $x$. 
Even so this curvature or potential energy is an element in the ring  $\mathbb{K}$,
Theorem~(\ref{Theorem 1}) can be interpreted therefore as an {\bf Gauss-Bonnet} formula

\begin{coro}
$\chi(G) = \sum_{x \in G} K(x) = {\rm tr}(S g) = {\rm str}(g)$.
\label{Corollary 6}
\end{coro}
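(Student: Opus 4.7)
The plan is to unfold the definition of $K(x)$ and swap the order of summation, using the fact that every simplex in $G$ has topological Euler characteristic equal to $1$. First I would observe that since $\omega(x)^2=1$, the diagonal Green value simplifies:
$$ g(x,x) = \omega(x)^2 \chi(W^+(x) \cap W^+(x)) = \chi(W^+(x)) = \sum_{z \in G,\, x \subset z} h(z), $$
so that $K(x) = \omega(x) g(x,x) = \omega(x) \sum_{z \in G,\, x \subset z} h(z)$.

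Next I would sum over $x \in G$ and interchange the two finite sums:
$$ \sum_{x \in G} K(x) = \sum_{z \in G} h(z) \sum_{x \in G,\, x \subset z} \omega(x). $$
Because $G$ is a simplicial complex, the inner sum ranges over every non-empty subset $x$ of $z$, and the identity $\sum_{x \subset z} \omega(x) = \chi_{top}(\bar z) = 1$ holds; this is the same Euler-characteristic-of-a-simplex fact used in the proof of Theorem~(\ref{Theorem 1}), and reduces to the binomial identity $\sum_{k \geq 0} \binom{|z|}{k+1}(-1)^k = 1$. Therefore $\sum_{x \in G} K(x) = \sum_{z \in G} h(z) = \chi(G)$, which establishes the first equality.

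The remaining equalities are bookkeeping: by the definition of the super trace recalled just above the statement, ${\rm str}(g) = \sum_{x \in G} \omega(x) g(x,x) = \sum_{x \in G} K(x)$, and the relation ${\rm str}(g) = {\rm tr}(S g)$ is also taken directly from the definition of $S$ as the operator implementing the sign $\omega$ on the diagonal. So the three quantities collapse to the single weighted diagonal sum already computed.

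The main obstacle is conceptual rather than technical: one must recognize that the corollary is \emph{not} a specialization of Theorem~(\ref{Theorem 1}) (which sums $g(x,y)$ over all pairs), but an independent identity obtained from the diagonal alone, and that the collapse from a double sum to a single sum is legitimate because each $z \in G$ is charged exactly once via $\chi_{top}(\bar z) = 1$. Once this is recognized, the computation is a two-line swap of summation, and no further machinery beyond the definitions of $g$, $\omega$, and $\chi$ is needed.
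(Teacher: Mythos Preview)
Your proof is correct and follows essentially the same route as the paper's own argument: expand $g(x,x)$ as a sum over the star $W^+(x)$, weight by $\omega(x)$, interchange the two finite sums, and collapse the inner sum using $\sum_{x \subset z} \omega(x) = 1$. The paper phrases this as a comparison of coefficients in the free-algebra picture (writing $h(x)=x$ and matching the coefficient of each formal variable), whereas you work directly with the values $h(z)$ and an explicit swap of summation; the underlying computation is identical.
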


\begin{proof}
We have 
$$  g(u,v) = \sum_{x \in G, u \cup v \subset x} \omega(u) \omega(v) x  \; , $$
so that 
$$  {\rm str}(g) = \sum_{u} \omega(u) g(u,u) = \sum_u \omega(u) \sum_{x \in G, u \subset x} x $$
Comparing the coefficient of the expression $x$ in $\chi(G) = \sum_{x \in G} x$ with the expression $x$
appearing in ${\rm str}(g) = [\sum_{u \subset x} \omega(u)] x$ gives a match because the topological 
Euler characteristic of a simplex $x \in G$ is $1$. 
\end{proof}

\paragraph{}
Corollary \ref{Corollary 6} can be seen as a connection analogue 
of the discrete McKean Singer formula $\chi(G) = {\rm str}(e^{-H t})$ \cite{McKeanSinger,knillmckeansinger}
for the {\bf Hodge Laplacian} $H=(d+d^*)^2 = D^2$, where $d$ are the {\bf incidence matrices} of the simplicial
complex. The self-adjoint Dirac operator $D$ and its square $D^2=H$ act on the same Hilbert space than the matrices $L,g$ 
and produces a symmetry of the non-zero spectrum of even and odd dimensional forms. The kernel of the blocks of
$H=D^2$ are the Betti numbers. We should see $L$ as the {\bf connection analog} of $D$ and $L^* L$ as the analogue
of $H$. We do not have kernels of $L$ and no Hodge theory for $L$. There are some relations although. 
The matrix $(L+g)^*(L+g)=L^*L + g^* g+2$ and  for one-dimensional complexes, $L+g$ is the 
sign-less Hodge Laplacian. 

\begin{coro}
$\omega(G) = {\rm tr}(Sg^*S g))$
\label{Corollary 6}
\end{coro}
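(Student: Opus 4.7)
The plan is to observe that the Corollary is essentially a repackaging of Theorem~(\ref{Theorem 2}), which already asserts $\omega(G) = \sum_{x,y \in G} \omega(x)\omega(y) |g(x,y)|^2$. So it suffices to show that ${\rm tr}(Sg^*Sg)$ equals this same double sum. Reading $S$ in the manner consistent with the identity ${\rm str}(L)={\rm tr}(SL)$ stated earlier, so that $(SA)(x,y) = \omega(x) A(x,y)$ and $(AS)(x,y) = A(x,y)\omega(y)$ (the $\omega$-values are central $\pm 1$ scalars), a direct expansion of the matrix product gives
\[
(Sg^*Sg)(x,x) \;=\; \sum_{y \in G} \omega(x)\omega(y)\, g^*(x,y)\, g(y,x).
\]

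The next step is the symmetry $g(x,y) = g(y,x)$, which is immediate from the defining formula $g(x,y) = \omega(x)\omega(y)\chi(W^+(x) \cap W^+(y))$ because $W^+(x) \cap W^+(y)$ is symmetric in its arguments. Substituting $g(y,x) = g(x,y)$ and then summing over $x$ yields
\[
{\rm tr}(Sg^*Sg) \;=\; \sum_{x,y \in G} \omega(x)\omega(y)\, g^*(x,y)\, g(x,y) \;=\; \sum_{x,y \in G} \omega(x)\omega(y)\, |g(x,y)|^2,
\]
and Theorem~(\ref{Theorem 2}) identifies this last expression with $\omega(G)$.

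The only real subtlety is unwinding the notation for $S$, since the paper writes $S(x,y) = \omega(x)\omega(y)$ but simultaneously uses ${\rm tr}(SL)={\rm str}(L)$, which forces one to read $S$ as the diagonal matrix of signs, acting as multiplication by $\omega$ on each index, rather than the rank-one outer product $\omega\omega^T$. Beyond that the argument is bookkeeping: because the signs $\omega(x) \in \{\pm 1\}$ are central and self-conjugate, no reordering issue arises even when $\mathbb{K}$ is noncommutative or nonassociative, so the conclusion drops out of Theorem~(\ref{Theorem 2}) without further input.
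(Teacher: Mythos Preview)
Your proof is correct and matches the paper's approach: both define $A=Sg^*S$ so that $A(x,y)=\omega(x)\omega(y)g^*(x,y)$, expand ${\rm tr}(Ag)$ as the double sum $\sum_{x,y}\omega(x)\omega(y)|g(x,y)|^2$, and invoke Theorem~\ref{Theorem 2}. Your explicit remark that $S$ must be read as the diagonal sign matrix (rather than the rank-one matrix $\omega\omega^T$) is a useful clarification the paper leaves implicit, and your appeal to the symmetry $g(x,y)=g(y,x)$ is harmless though not strictly required, since a relabelling $x\leftrightarrow y$ in the trace sum accomplishes the same thing.
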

\begin{proof}
Define
$$   A(x,y) = (Sg^*S)(x,y) = \omega(x) \omega(y) g^*(x,y) $$ 
so that we can see this as a {\bf Hilbert-Schmidt inner product}
$$ {\rm tr}( S g^* S g) = {\rm tr}(A g) = \sum_{x,y} A(x,y) g(x,y) = \sum_{x,y} \omega(x) \omega(y) |g(x,y)|^2 \; . $$
Now use Theorem~(\ref{Theorem 2}).
\end{proof}

\paragraph{}
We still have a relation for the {\bf cubic Wu characteristic}
$\omega_3(G) = \sum_{x,y,z \in G} h(x)^* h(y) h(z)$, where the sum is over all triples which pairwise
interact. We have $\omega_3(G) = {\rm tr}((S g)^3)$ but this then starts to fail for 
${\rm tr}((S g)^4)$. We still need to investigate more these higher Wu characteristic $\omega_n(x)$ for $n \geq 3$. 
While for $\omega$ we look only at pair interactions, for $\omega_3$ we look at three point interactions
Because the Green function entries $g(x,y)$ can be through of as the interaction energy 
between $x$ and $y$, it is likely that some ``tensor quantity" like
$L^+(x,y,z) = \chi(W^+(x) \cap W^+(y)) + \chi(W^+(x) \cap W^+(z)) + \chi(W^+(y) \cap W^+(z))$ 
will capture the three point interaction of the three simplices $x,y,z$ better. 

\paragraph{}
If $h$ takes values in a real or complex Hilbert space $\mathcal{H}$, one could replace the pairing 
$h(x)^* h(y) \in \mathbb{K}$ with 
some {\bf inner product} $h(x)^* \cdot h(y) = \langle h(x), h(y) \rangle \in \mathbb{C}$. 
If $h$ takes values in unit spheres of a Hilbert space, one gets then close to an {\bf Ising} or 
a {\bf Heisenberg type model} (i.e. \cite{SimonStatMechanics}).
By the classification of real division algebras, this is natural for $\mathbb{R}$ (Ising),
$\mathbb{C}$ (2D Heisenberg) and $\mathbb{H}$ (3D Heisenberg). 
Also the octonion case $\mathbb{O}$ or any linear space with Hilbert space works.
For non-commutative cases like $\mathbb{H},\mathbb{O}$, the determinant becomes the 
Dieudonn\'e determinant which in the non-commutative division algebra case happens to agree with 
the {\bf Study determinant} and in our case $\prod_x |h(x)|$.
If we do not insist on working with determinants, we can have $h(x)$ take
values in the unit sphere of any Hilbert space and still have $g^* L=1$. 
With the dot product as ``multiplication", the right hand side $1$ in $g^* L = 1$ 
does then have real entries $1$ and operator $1$ entries like the matrices $g$ and $L$. 

\paragraph{}
If $A,B \subset G$ are any subsets with $k$ elements, we can look at {\bf minors} 
${\rm det}(g_{A,B})$ which are matrix entries of the {\bf exterior product} $g \wedge g \cdots  \wedge g$. 
The {\bf Fredholm energy} ${\rm det}(1+g^* g)$ is a sum over all possible amplitudes 
$|{\rm det}(g_{A,B})|^2$, where $A,B \subset G$ have the same cardinality.
This is a {\bf generalized Cauchy-Binet} formula \cite{cauchybinet}
$$ {\rm det}(1+F^T G) = \sum_{P} {\rm det}(F_P) {\rm det}(G_P) $$
which holds for all $n \times m$ matrices $F,G$ and also extends to Dieudonn\'e determinants.
We can think of a subset $A \subset G$ with $|A|=k$ as a
{\bf $k$-particle state} and $\chi(A)$ as a sort of momentum and $\omega(A)$ as a sort of kinetic energy.
For two $A,B$ of cardinality $k$, the minor $g(A,B) = {\rm det}(g_{A,B})$ is a matrix entry of
$\wedge_{j=1}^k g$.  The Cauchy-Binet relation $g^2(A,B) = \sum_C g(A,C) g(C,B)$ and
more generally the $n$'th matrix power $g^n(A,B)$ sums over all paths 
$$ g(A,C_1) g(C_1,C_2) \dots g(C_{n-1},B) \; . $$
We mention this to illustrate that there is a {\bf multi-particle interpretation} 
of the set-up. The determinant ${\rm det}(L)$ is then an $n$-particle quantity. 
The additive energy $\chi(G)$ the quadratic energy $\omega(G)$ and the {\bf Fredholm energy}
$\sum_j 1+|\lambda_j|^2$ are now all natural notions.

\paragraph{}
Unrelated to the {\bf intersection calculus} described in Theorems~(\ref{Theorem 1}) to 
(\ref{Theorem 4}) is an {\bf incidence calculus}
defined by {\bf incidence matrices} $d$ defining an {\bf exterior derivative} satisfying $d^2=0$.
The {\bf Dirac matrix} $D=d+d^*$ and the {\bf Hodge Laplacian} $H=(d+d^*)^2$ are like $L,g$
finite matrices of the same size $n \times n$ than $L$ or $g$. When doing a {\bf Lax deformation} of $D$, we
deform the exterior algebra the matrix entries of $d$ become then ring valued. 
The Hodge matrix $H$ is block diagonal with blocks $H_i$ for which ${\rm dim}({\rm ker}(H_i)=b_i$ are still
{\bf Betti numbers} defining the Poincar\'e polynomial $p(t) = \sum_{j=0} b_j t^j$. 
This information uses the topological $h(x)=\omega(x)$ and by Euler-Poincar\'e,
the topological Euler characteristic $\chi(G)=\sum_x \omega(x)$ is the Poincar\'e polynomial evaluated at $t=-1$.
For Wu characteristic, there is also a {\bf quadratic incidence calculus} by defining 
the exterior derivative $dF(x,y) = F(dx,y)-F(x,dy)$
leading to Betti numbers and a {\bf Wu-Poincar\'e polynomial} $q(t)$, where $q(-1)$ is the
Wu characteristic $\omega(G)=\sum_{x \sim y} \omega(x) \omega(y)$. Also the {\bf Wu-Poincar\'e map}
$q: \mathcal{G} \to \mathbb{Z}[t]$ is a ring homomorphism.
Unlike simplicial cohomology associated with $\chi(G)$, the {\bf quadratic incidence cohomology} associated
with $\omega(G)$ is not a homotopy invariant. But it can distinguish the cylinder from the M\"obius strip.
Also here, the Dirac operator can be deformed in a $\mathbb{K}$-valued frame work (for associative 
$\mathbb{K}$ without changing the quadratic cohomology. 

\paragraph{}
For subset $A \subset G$, the sum $\omega(A) = \sum_{x,y \in A} h(x) h(y)$ does in general 
not relate to the Green function entries $g(x,h)$, where $g(x,y)$ is the Green function
of the entire complex $G$. This also was the case for $\chi(A) = \sum_{x \in A} h(x)$
which is in general not the sum over all green function entries of $G$, nor of $A$ (as the
energy theorem requires that $A$ is a simplicial complex). 
For sub-complexes $A \subset G$ we can take the Green functions of the sub-complex and ignore
the outside $G \setminus A$.  With $\overline{A} = \bigcup_{x \in A} W^+(x)$ as some sort of closure,
we tried to see whether $\chi(\overline{A})$ agrees with $\sum_{x,y \in A} g_A(x,y)$ or
$\omega(\overline{A}) = \sum_{x,y \in A} |g_A(x,y)|^2 \omega(x) \omega(y)$ 
but this also does not seem to work. The quantities $\omega(A)$ depends on how $A$ is embedded in $G$. 
There are interaction energies between $A$ and places outside $A$ if $A$ is not a simplicial complex
itself. The boundary is crucial. We know that for a discrete manifold $G$ without boundary in the topological
case $\omega(G) = \chi(G)$ and for a discrete manifold $G$ with boundary $\delta G$ one has 
$\omega(G) = \chi(G) \setminus \chi(\delta G))$. This implies that $\omega(B)= (-1)^d$ for a closed ball $B$
of dimension $d$ and so $\omega(x) = \omega(X) = (-1)^{{\rm dim}(x)}$ if $X$ is the complete simplicial complex 
generated by a simplex $x$. This is the reason why we denoted the Wu characteristic with $\omega$. 

\paragraph{}
If $h$ takes values in $\{-1,1\}$, then $L,g$ are inverses of each other by Theorem~(\ref{Theorem 4}) 
are {\bf real integral quadratic} forms for which the number of negative eigenvalues
agree with the number of negative $h$ values. This follows from the relation 
${\rm det}(L)={\rm det}(g)=\prod_{x \in G} h(x)$
holding for all $\mathbb{C}$-valued $h$ and which when comparing arguments shows that
$\sum_j {\rm arg}( \lambda_j) = \sum_{x \in G} {\rm arg}(h(x))$. More generally:

\begin{coro}
If $\mathbb{K}=\mathbb{R}$ and $h(x) \neq 0$ for all $x$, then the number of elements
in $G$ with $h(x)>0$ agrees with the number of positive eigenvalues of $L$ or $g$.
\end{coro}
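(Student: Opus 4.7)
My plan is to exhibit $L$ and $g$ as \emph{congruent}, in the sense of Sylvester's law of inertia, to the diagonal matrix $D = \mathrm{diag}(h(x))_{x \in G}$. Since $D$ has exactly $|\{x : h(x) > 0\}|$ positive eigenvalues and no zero eigenvalue under our hypothesis, Sylvester's law then transfers this count to $L$ and $g$ and settles the corollary.

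For $L$, I would introduce the inclusion matrix $N$ on $G \times G$ defined by $N(x,y) = 1$ if $x \subset y$ and $0$ otherwise, and unpack
\[ (N^T D N)(u,v) \;=\; \sum_{x \in G} [x \subset u]\,h(x)\,[x \subset v] \;=\; \sum_{x \subset u \cap v} h(x) \;=\; L(u,v), \]
so $L = N^T D N$. Ordering $G$ in nondecreasing dimension makes $N$ upper triangular with unit diagonal, hence $\det N = 1$ and $N$ is invertible; Sylvester's law then identifies the signature of $L$ with that of $D$, which is $|\{x : h(x) > 0\}|$.

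For $g$ I would first factor out the sign twist: setting $S = \mathrm{diag}(\omega(x))$ and $\tilde g(u,v) = \sum_{u \cup v \subset x} h(x)$, one has $g = S \tilde g\, S$, and since $S^2 = I$ the matrices $g$ and $\tilde g$ are congruent. An analogous computation with the dual inclusion matrix $M(u,x) = [u \subset x]$ gives $\tilde g = M D M^T$, and ordering $G$ in nonincreasing dimension makes $M$ lower triangular with unit diagonal, hence invertible. So $\tilde g$, and therefore $g$, is also congruent to $D$, and the same signature count applies.

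The only real work is spotting the two bilinear factorizations $L = N^T D N$ and $\tilde g = M D M^T$; once these are written down, Sylvester's law plus the elementary observation that both inclusion matrices are triangular under the natural dimension orderings does the rest, so there is no genuine obstacle. From this vantage point, the determinant identity $\det L = \prod_x h(x)$ of Theorem~\ref{Theorem 3} appears as a direct consequence of the stronger congruence $L = N^T D N$, which now upgrades the determinant information to full signature information.
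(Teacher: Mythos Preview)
Your proof is correct and takes a genuinely different route from the paper. The paper deduces the corollary from the determinant identity $\det(L)=\det(g)=\prod_{x}h(x)$ of Theorem~\ref{Theorem 3}, valid for all $\mathbb{C}$-valued $h$: by deforming $h$ through complex values and tracking phases continuously one gets $\sum_j \arg(\lambda_j)=\sum_{x}\arg(h(x))$, which for real nonzero $h$ matches the number of negative eigenvalues with the number of negative $h$-values. Your argument instead exhibits the explicit congruences $L=N^{T}DN$ and $g=(SN)\,D\,(SN)^{T}$ with the zeta matrix $N$ of the inclusion poset, and then invokes Sylvester's law of inertia. This is more elementary---it stays entirely within real linear algebra and avoids the analytic step of following complex eigenvalues along a homotopy---and, as you note, it recovers the commutative case of Theorem~\ref{Theorem 3} for free rather than consuming it as input (indeed $M=N$, so both factorizations use the same unipotent triangular matrix). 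What the paper's approach buys in exchange is the phase relation $\sum_j\arg(\lambda_j)=\sum_x\arg(h(x))$ itself, which carries meaning for genuinely complex $h$ beyond the real signature count and fits the paper's emphasis on Theorem~\ref{Theorem 3} as the organizing identity.
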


\paragraph{}
In the constant case $h(x)=1$, the matrices $L,g$ are
{\bf integral positive definite quadratic forms} $L,g$
which are inverses of each other $L^{-1}=g$ and which have a {\bf symplectic property}
in that they are iso-spectral \cite{CountingMatrix}. The reason for the association is that
symplectic matrices have the
property that the inverse of a matrix has the same eigenvalues than the matrix itself. 
It is known by a {\bf theorem of Kirby} that if $n$ is even and a $n \times n$ matrix has
this spectral symmetry of $\sigma(L) = \sigma(L^{-1}$, then $L$ is conjugated to a 
symplectic matrix $A$ (meaning $A^T J A = J$ with the standard symplectic matrix $J$ 
satisfying $J^2=-1$ and $J^T=J^{-1}=-J$. The spectral property follows from the definition
$A^{-1} = J^T A^T J$.)  In general, since $L,g=L^{-1}$ are self-adjoint, it follows from the spectral
theorem that there is an orthogonal $U$ such that $L^{-1} = U^T L U$. In the symplectic case,
the unitary matrix is $U=J$. Kirbi's observation is just that that if $n$ is even, there is a 
coordinate system in which $U=J$ and that if $n$ is odd we have an eigenvalue $1$, there is
a coordinate system in which the unitary $U$ decomposes into a $(n-1) \times (n-1)$ symplectic
block $J$ and a $1 \times 1$ block $1$.

\paragraph{}
Still in the case $h(x)=1$, the {\bf spectral Zeta function} of $L$
$\zeta(s) = \sum_{j=1}^n \lambda_j^{-s}$ is an entire function in $s$ satisfying
the functional equation $\zeta(a+ib) = \zeta(-a+ib)$. The reason is that there is
not only the symmetry $\zeta(z)=\zeta(-z)$ but also the symmetry $\zeta(z) = \zeta(z^*)$, where
$z^*$ is the complex coordinate.
The same functional equation $\zeta(a+ib) = \zeta(-a+ib)$ 
for the zeta function holds if $h(x)=\omega(x)$ and if $G$ is one-dimensional.
In general, if $h$ is complex valued, the zeta function needs to be defined properly as
it is not clear which branch of the logarithm to use for each $\lambda$. It should then 
be considered for the matrix $L^* L=|L|^2$ or its inverse $g^* g=|g|^2$ which are 
positive definite self-adjoint matrices and so have real eigenvalues. 

\paragraph{}
If $h:G \to \mathbb{K}$ takes values in the units of a ring of integers $\mathcal{O}$
in a number field $\mathbb{K}$, then $g^* g$ is a positive definite integer quadratic form
over $\mathcal{O}$ and $L^* L$ is the inverse of $g^* g$. They are both positive definite
$\mathcal{O}$-valued quadratic forms. We could also take the iso-spectral $gg^*$ rather than 
$g^*g$ but selfadjoint cases like $g+g^*$ or $(L+g)^* (L+g)$ do not have an inverse in general.
For $\mathbb{K}=\mathbb{C}$, and $h(x) \neq 0$, we get positive definite Hermitian forms
$g^* g$ and $L^* L$. There is a unique Hermitian matrix
$A$ such that $e^{-A} = g^* g$ and $e^A = L^* L$. One can get them by finding
the unitary matrix $U$ with diagonal $U^* (g^* g) U = D$ and $U^* (L^* L) U = D^{-1}$
then defining $A=U \log(D) U^*$. Now we can define for $t \in \mathbb{C}$ the one-parameter group $e^{A t}$ of
operators which for $t=1$ gives $L^* L$ and for $t=-1$ gives $g^* g$.

\paragraph{}
If $\lambda_j$ are the eigenvalues of $A=g^* g$, 
the zeta function $\zeta(s) = \sum_{j=1}^n \lambda_j^{-s}$ which can
be rewritten as ${\rm tr}(g^* g)^{s}) = {\rm tr}(L^* L)^{-s})$. It makes sense for all $s \in \mathbb{C}$.
The {\bf Schr\"odinger equation} $i u'= -A u$ has the solution $u(t) = U(t) u(0) = e^{-i A t} u(0) = (g^* g)^{it} u(0)$
so that ${\rm tr}( U(t)) = \zeta(it)$. The zeta function is therefore both interesting for the random
reversible walk $(L^* L)^n$ (when taking integer $n$ and for the unitary Schr\"odinger flow $(g^* g)^{it}$.
We need only that $h(x)$ takes values in some unitary group of an operator algebra,
so that Theorem~\ref{Theorem 4} applies. We have now an action of the complex plane $\mathbb{C}$ which 
leads to a trace interpretation of the zeta function: 

\begin{coro}
For $H=L^* L=e^{A}$ the flow $H^{s}$ is defined for all complex $s \in \mathbb{C}$ 
and $\zeta(s) = {\rm tr}(H^{s})$.
\end{coro}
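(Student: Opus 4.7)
The plan is to reduce the corollary to an application of the finite-dimensional spectral theorem for a positive definite self-adjoint operator, after using Theorem~\ref{Theorem 4} to establish invertibility of $L$.

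First I would invoke the standing hypothesis that $h$ takes unit values, so that Theorem~\ref{Theorem 4} gives $g^* = L^{-1}$. In particular $L$ is invertible and $H = L^* L$ is strictly positive definite and self-adjoint, with spectrum $\{\mu_1,\dots,\mu_n\} \subset (0,\infty)$. Diagonalizing $H = U D U^*$ with $D = {\rm diag}(\mu_1,\dots,\mu_n)$, the Hermitian matrix $A = U \log(D) U^*$ introduced in the preceding paragraph is then the unique Hermitian logarithm of $H$, with real eigenvalues $\log \mu_j$, confirming $H = e^{A}$.

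Next I would define the flow $H^s$ for $s \in \mathbb{C}$ by the holomorphic functional calculus. Because $\sigma(H) \subset (0,\infty)$, the principal branch of the logarithm is unambiguous on $\sigma(H)$, and I would set
\[ H^s = e^{sA} = U\,{\rm diag}(\mu_1^s,\dots,\mu_n^s)\,U^*. \]
Each $\mu_j^s = e^{s \log \mu_j}$ is entire in $s$, so $H^s$ is an entire matrix-valued function of $s$, satisfies $H^{s+t} = H^s H^t$, and specializes to $L^* L$ at $s=1$ and to $g^* g = H^{-1}$ at $s=-1$, matching the setup of the paragraph before the corollary.

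The last step is the trace computation. Since the eigenvalues of $g^* g = H^{-1}$ are $\lambda_j = \mu_j^{-1}$, we have $\lambda_j^{-s} = \mu_j^{s}$ for every $s \in \mathbb{C}$, hence ${\rm tr}(H^s) = \sum_j \mu_j^s = \sum_j \lambda_j^{-s} = \zeta(s)$. The only delicate point in the whole argument is making $H^s$ unambiguous for complex $s$, because a general matrix power requires a choice of branch of $\log$; this is exactly what positive-definiteness of $H$ buys us, and is the reason the unitarity hypothesis $h^* h = 1$ (used through Theorem~\ref{Theorem 4}) cannot be weakened to mere non-degeneracy of $h$ without additional work on branch cuts.
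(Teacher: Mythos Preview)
Your proof is correct and follows essentially the same route as the paper, which gives the argument in the two paragraphs preceding the corollary rather than as a formal proof: diagonalize the positive definite $H=L^*L$, take $A=U\log(D)U^*$ as its unique Hermitian logarithm, define $H^s=e^{sA}$, and read off ${\rm tr}(H^s)=\sum_j\mu_j^s=\sum_j\lambda_j^{-s}=\zeta(s)$ using $g^*g=(L^*L)^{-1}$.

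One small correction to your closing remark: the well-definedness of $H^s$ does not actually hinge on unitarity. Positive-definiteness of $H=L^*L$ requires only that $L$ be invertible, and this already follows from $h(x)\neq 0$ for all $x$ via Theorem~\ref{Theorem 3}, since ${\rm det}(L)=\prod_x h(x)$. The reason the paper invokes the unitarity hypothesis and Theorem~\ref{Theorem 4} just before the corollary is different: it is needed to ensure $g^*=L^{-1}$ and hence $g^*g=(L^*L)^{-1}$, so that the zeta function $\zeta(s)=\sum_j\lambda_j^{-s}$ built from the eigenvalues $\lambda_j$ of $g^*g$ genuinely coincides with ${\rm tr}((L^*L)^s)$. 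Without that, $H^s$ is still perfectly well-defined, but it computes the wrong zeta function.
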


With classical Laplacians this is not possible. The zeta function of the circle is related to the 
quantum Harmonic oscillator and is the {\bf Riemann
zeta function}. The trace of the evolution in negative time only exists by analytic continuation and one has
to disregard the zero energy. For classical Laplacians $\Delta$ on functions 
or Hodge Laplacians $(d+d^*)^2$ on forms, the heat flow can not be 
evolved backwards due to the existence of harmonic forms leading non-invertibility. 
Also discrete random walks defined by stochastic matrices not be reversed as there
are always zero eigenvalues.

\paragraph{}
We could also define a {\bf non-linear Schr\"odinger flow} as follows. Let $h(t)=u(t)$ define $L(t)$, then
look at the differential equation $u'(t) = i H(t) u(t)$ in which the energy operator $H(t)=L(t) L(t)^*$
is defined by the wave $u(t)$. A discrete version is to start with $u(0)$, then define $u(1) = L^*_0 L_0 u(0)$
then $u(2) = L^*_1 L_1 u(1)$, where always $L_k$ are defined by the functions $u(k): G \to \mathbb{K}$. 
This flow still defines a zeta function
$\zeta(s) = {\rm tr}(H^{-s})$ but now the eigenvalues $\lambda_k(t)$ move with time 
and we might have to analytically continue to define $\zeta(s)$. We have not yet explored that. 
The possibility to attach operators $L$ to a wave $h: G \to \mathbb{C}$ and then 
{\bf let these operators $L$ act on the wave} is an interesting case, where fields $h$ become {\bf quantized} in the
sense that we attach an operator to a field and let this operator propagate the field. 
This is an ingredient of {\bf quantum field theories}. Only that in this combinatorial settings, it
only in involves combinatorics and linear algebra, leading to non-linear ordinary differential equations.
Because the dynamics does not change the norm of the operators or fields, there is a globally defined
dynamics. We still need to investigate this flow and study its long term properties depending on the geometry $G$. 

\paragraph{}
We will elaborate elsewhere more on the arithmetic of complexes $G$ as the current work is heavily motivated by that.
Complexes generate a natural ring $\mathcal{R}$ in which the addition is the disjoint union and the multiplication is
the Cartesian product. There is a natural norm on this Abelian ring $\mathcal{R}$ given in terms of the 
{\bf clique number} $c(G)$ of the
graph complement of the connection graph of $G$ then defining $|G|={\rm min}_{G=A-B}|c(A)+c(B)|$ in the group completion
of the monoid given by disjoint union. This works as $c(A+B)=c(A) + c(B), c(A \times B) = c(A) c(B)$ for 
simplicial complexes. This defines a norm satisfying the Banach algebra property $|G_1 \times G_2| \leq |G_1 \times G_2|$ 
so that we can complete the ring to a {\bf commutative Banach algebra} and with 
a conjugation even to a {\bf $C^*$-algebra} $\mathcal{K}$ extending the Banach algebra of complex numbers
$\mathbb{C}$ we know for our usual arithmetic constructs. Actually, the complex plane is a sub algebra 
generated by $0$-dimensional complexes, leading to a complex scaling multiplication $G \to \lambda G$ for 
complex $\lambda$. So, the base space $G$ is in $\mathbb{K}$ but also the target ring $\mathbb{K}$ 
can be that space. Now take $\mathbb{K}=\mathcal{K}$. 
For example, we can look at $h(x) = X$, where $X$ is the complex generated by the
set $x$. This function defines $\chi: \mathcal{K} \to \mathcal{K}$ given by $\chi(X) = \sum_{x \in X} h(x)$.
The spectral properties of $L$ and $g$ are such that the spectra are the union of spectra under addition 
and the product of the spectra under multiplication. This shows that for every fixed complex number $s$, the 
value $G \to \zeta_{G}(s)$ is a character and so an element in the Gelfand spectrum of the ring $\mathbb{K}$
which by the Gelfand isomorphism is $C(K)$ for some compact topological space $K$ (it is compact because
$\mathbb{K}$ is unital). The zeta map $s \to \zeta_G(s)/n(G) \in K$, where $n(G)=\zeta_G(0)={\rm tr}(L(G))^0)$ is 
for connected finitely generated simplices the number of elements in $G$, 
which extends to a character in $\mathcal{K}$, 
now embeds the complex line in the compact 
space $K$. We don't know whether this {\bf zeta curve} is dense in the Gelfand spectrum $K$. 
We can for example ask whether $n: \mathcal{K} \to \mathbb{C}$ defined by extending cardinality to $\mathcal{G}$ 
or Euler characteristic $\chi_{top}: \mathcal{K} \to \mathbb{C}$ which are known to be a characters 
correspond to points in the spectrum $K$ of $\mathcal{K}$, can be approximated by a zeta curve. 
This is related to the open question whether we can read off the topological Euler characteristic 
$\chi(G)$ from the spectrum of a natural connection Laplacian $L$ like in the topological case
when $h(x)=\omega(x) \in \{-1,1\}$. 

\section{Examples}

\paragraph{}
Lets take the example, where $\mathbb{K}$ is the free algebra generated 
by the variables $x_1,x_2, \dots, x_n$ augmented by conjugated entries $x_k^*$ defining
$|x_k|^2 = x_k^* x_k$ in an enumeration of 
$V=\bigcup_{x \in G} x = \{x_1,x_2,\dots, x_n\}$. 
For $G=K_2 = \{ \{1\},\{2\},\{1,2\} \}=\{x_1,x_2,x_3 \}$ we have 
$\chi(G) = x_1+x_2+x_3$ and
$\omega(G) = x_1^*x_1 + x_2^* x_2  + x_3^* x_3 + x_1^* x_3 + x_3^* x_1 + x_2^* x_3 + x_3^* x_2$. 
The matrices  
$$ L = \left[ \begin{array}{ccc} x_1 & 0 & x_1 \\ 0 & x_2 & x_2 \\ x_1 & x_2 & x_1+x_2+x_3 \\ \end{array} \right],
   g = \left[\begin{array}{ccc}x_1+x_3&x_3&-x_3\\x_3&x_2+x_3&-x_3\\-x_3&-x_3&x_3\\\end{array}\right] \;  $$
multiply to 
$$ g^* L = \left[
                  \begin{array}{ccc}
                   |x_1|^2 & 0 & |x_1|^2-|x_3|^2 \\
                   0 & |x_2|^2 & |x_2|^2-|x_3|^2 \\
                   0 & 0 & |x_3|^2 \\
                  \end{array}
                  \right] \; . $$

\paragraph{}
For the next example $G=\{\{1\},\{2\},\{3\},\{1,2\},\{1,3\},\{2,3\}\}$
lets use variables $G=\{x,y,z,a,b,c\}$. Now,
$$ L =  \left[
\begin{array}{cccccc}
 x & 0 & 0 & x & x & 0 \\
 0 & y & 0 & y & 0 & y \\
 0 & 0 & z & 0 & z & z \\
 x & y & 0 & a+x+y & x & y \\
 x & 0 & z & x & b+x+z & z \\
 0 & y & z & y & z & c+y+z \\
\end{array}
                  \right] \; , $$
$$ g = \left[
\begin{array}{cccccc}
 a+b+x & a & b & -a & -b & 0 \\
 a & a+c+y & c & -a & 0 & -c \\
 b & c & b+c+z & 0 & -b & -c \\
 -a & -a & 0 & a & 0 & 0 \\
 -b & 0 & -b & 0 & b & 0 \\
 0 & -c & -c & 0 & 0 & c \\
\end{array}
                  \right]  \; .  $$
One can check that $\sum_{x,y} g(x,y) =  a + b + c + x + y + z = \chi(G)$. We
have 
$\omega(G) = a b + b a +a c + c a +a x + x a$  $+a y + y z +b c + c a +b x + x b +b z + z b$
$+c y + y c +c z + z c+$ $|a|^2+|b|^2 +|c|^2$ $+ |x|^2+|y|^2+|z|^2$,
the generating function for the intersection relations. We compute
$\sum_{x,y} \omega(x) \omega(y) g(x,y)^2 = |a+b+x|^2+|a+c+y|^2+|b+c+z|^2-a^2-b^2-c^2$ and can
check that this is the same. We have ${\rm det}(L)={\rm det}(g) = abcxyz$. Finally, we see
$$  g L = \left[
\begin{array}{cccccc}
 |x|^2 & 0 & 0 & |x|^2-|a|^2 & |x|^2-|b|^2 & 0 \\
 0 & |y|^2 & 0 & |y|^2-|a|^2 & 0 & |y|^2-|c|^2 \\
 0 & 0 & |z|^2 & 0 & |z|^2-|b|^2 & |z|^2-|c|^2 \\
 0 & 0 & 0 & |a|^2 & 0 & 0 \\
 0 & 0 & 0 & 0 & |b|^2 & 0 \\
 0 & 0 & 0 & 0 & 0 & |c|^2 \\
\end{array}
                   \right]  \; . $$
If all entries have length $1$, we get the identity matrix.

\paragraph{}
Lets look at the example $G=\{\{1\},\{2\},\{1,2,3\}\}$ which is not a simplicial complex.
Denote the energy variables by $G=\{ x,y,z\}$. Now,
$$ L= \left[ \begin{array}{ccc} x & 0 & x \\ 0 & y & y \\ x & y & x+y+z \\ \end{array} \right], 
   g= \left[ \begin{array}{ccc} x+z & z & z \\ z & y+z & z \\ z & z & z \\ \end{array} \right] \; . $$
We have $\omega(G) = x^2+2 x z+y^2+2 y z+z^2$ and
$$\sum_{x,y} \omega(x) \omega(y) g(x,y)^2 = (x+z)^2+(y+z)^2+7 z^2 $$ which are not the same. We need
the simplicial complex structure.
Also the  energy $\chi(G) = x+y+z$ does not agree with $\sum_{x,y \in G} g(x,y) =  x + y + 9 z$ so
that Theorem~(\ref{Theorem 1}) does not hold. 
We have however ${\rm det}(L)= {\rm det}(g) = xyz$. The determinant identity Theorem~(\ref{Theorem 3}) 
holds in general, also if $G$ is not a simplicial complex.

\bibliographystyle{plain}

\end{document}